\def\R{{\mathbb R}}
\newtheorem*{theorem*}{Theorem}
\newtheorem{theorem}{Theorem}   
\newtheorem{prop}[theorem]{Proposition} 
\newtheorem{lemma}[theorem]{Lemma}
\newtheorem{definition}[theorem]{Definition}
\def\R{{\mathbb R}} % Reals
\begin{document}
%\begin{document}
\overfullrule=0pt
\baselineskip=24pt
% if i uncomment the preceeding line, the text will be double-spaced.
\font\tfont= cmbx10 scaled \magstep3
\font\sfont= cmbx10 scaled \magstep2
\font\afont= cmcsc10 scaled \magstep2
\title{\tfont 
On Gauss-Kuzmin Statistics and the Transfer Operator for a Multidimensional Continued Fraction Algorithm: the Triangle Map}
\bigskip
\author{Thomas Garrity\\  
Department of Mathematics\\ Williams College\\ Williamstown, MA  01267\\ 
email:tgarrity@williams.edu }

\date{}
\maketitle
\begin{abstract}
The Gauss-Kuzmin statistics for the triangle map (a type of multidimensional continued fraction algorithm) are derived by examining the leading eigenfunction of the triangle map's transfer operator.  The technical difficulty is finding the appropriate Banach space of functions.  We also show that, by thinking of the triangle map's transfer operator as acting on a one-dimensional family of Hilbert spaces, the transfer  can be thought of as a family of nuclear operators of trace class zero.

\end{abstract}

\section{Introduction}
It is classical to think of continued fractions as iterations of the Gauss map on  the unit interval and then  to consider the underlying dynamical system.  For example,  the Gauss-Kuzmin statistics for continued fractions can be reduced to the study of the spectrum of the associated transfer operator \cite{ Hensley, Iosifescu-Kraaikamp1, Khinchin, Rockett-Szusz, Schweiger3}.  Most multidimensional continued fraction algorithms are iterations of a triangle $\triangle$ (though  the Jacobi-Perron algorithm acts on a square). For background on many types of multidimensional continued fraction algorithms, see Schweiger's {Multidimensional Continued Fractions} \cite{Schweiger4} and Karpenkov's {\it Geometry of Continued Fractions} \cite{Karpenkov13}.  Thus it is natural and standard  to think of these as dynamical systems.  This paper concentrates on the transfer operator for the triangle map \cite{GarrityT05,Garrity1, Karpenkov13, Messaoudi-Nogueira-Schweiger09, Schweiger05} and then uses results on the spectrum of this operator to look at the triangle map's Gauss-Kuzmin statistics.

We will show that the transfer operator for the triangle map is 
$$\mathcal{L}(f)(x,y) = \sum_{n=0}^{\infty} \frac{1}{(1+nx+y)^3} f \left( \frac{1}{1+kx+y},  \frac{x}{1+kx+y} \right)$$
Note that though this transfer operator looks similar to the transfer operator for the Gauss map: $$\sum  \frac{1}{(n+x)^2}f\left(\frac{1}{n+x}  \right),$$  there is one significant difference, namely that in the triangle transfer operator there is the  term  $nx$ in the denominator, 
while the continued fraction transfer operator has the term $n+x$ in the denominator.  This difference is what prevents us from applying the standard  bounds for the continued fraction case to the triangle case.  It is this difference that to a large extent makes this paper not simply an easy generalization of earlier work.

In this paper we find an appropriate Banach space of functions for which we can show that $1$ is the largest eigenvalue, with one-dimensional eigenspace,  of the transfer operator.  This can then be used to understand the Gauss-Kuzmin statistics for the triangle map.  We then show, in analog to the Gauss map, that there is also a Hilbert space approach.  We will see that in an appropriate sense the triangle map's transfer operator can be thought of as a nuclear operator.  Unlike the continued fraction case,  we cannot be working  in a Hilbert space of  square-integrable functions on the  two-dimensional domain $\triangle,$ as the natural eigenfunction with eigenvalue one is not square-integrable.  Instead we will be thinking of the transfer operator as acting on a space of functions $f(x,y)$ that are only  square-integrable with respect to the variable $y$.  Thus we will be acting on a family of Hilbert spaces parameterized by the variable $x$.

There are two overall goals for this paper.  First is to  begin the spectral analysis for the transfer operator of the triangle map. Second, though, is to set-up the machinery for joint work with Ilya Amburg  \cite{Amburg-Garrity15}.  While the triangle map, to some extent, is just one example among many of possible multidimensional continued fraction algorithms, in \cite{SMALL11q1, SMALL11q3}  it is shown that, by varying the triangle map in a natural way,  a whole collection of both new and old multidimensional continued fraction algorithms can be generated, called triangle partition maps.  Further, in that work, it is shown that these triangle partition maps generate a family of multidimensional continued fraction algorithms, called combination triangle partition maps,  that in turn  are shown  to include  many, if not most, known multidimensional continued fractions.  In \cite{Amburg-Garrity15}, it will be shown that the transfer operators for half of the triangle partition maps have behavior similar to that of the triangle map.  This will allow us to apply the results of this paper to this other work.  More interestingly, we will see that the transfer operators of the other   half of the triangle partition maps have  remarkably different properties.  

We would like to thank Ilya Amburg for  a lot of help on this paper, including collaboration on the actual formulas for the Gauss-Kuzmin statistics.  We would also like to thank L. Pedersen for many useful comments.

\section{Transfer Operator for Triangle Map}

 Subdivide $\triangle =\{(x,y)\in \R^2: 1>x>y>0\}$ into subtriangles
 $$\triangle_k = \{(x,y)\in \triangle: 1-x- ky\geq 0> 1-x -(k+1)y\}$$  
   \begin{definition} The triangle map $T:\triangle \rightarrow \triangle$ is defined by setting $T=T_k$ for $(x,y) \in \triangle_k,$ and in turn setting
 $$T_k(x,y)= \left( \frac{y}{x}, \frac{1-x-ky}{x} \right).$$
The point $(x,y)\in \triangle$ has triangle sequence $(a_0,a_1, a_2, \ldots )$ if 
 $$T^N(x,y) \in \triangle_{a_N}.$$
 \end{definition}
 For more on the triangle map, see \cite{GarrityT05,Garrity1, Karpenkov13, Messaoudi-Nogueira-Schweiger09, Schweiger05}

 To define the triangle map's transfer operator, we first compute the Jacobian:
 $$J(x,y)  =  \det  \left( \begin{array}{cc}  \frac{\partial}{\partial x} \left(\frac{y}{x}\right) & \frac{\partial}{\partial y} \left(\frac{y}{x}\right) \\ &\\ \frac{\partial}{\partial x} \left(\frac{1-x-ky}{x}\right)&  \frac{\partial}{\partial y} \left(\frac{1-x-ky}{x}\right) \end{array} \right) = \det  \left( \begin{array}{cc}  -\frac{y}{x^2}    & \frac{1}{x}   \\           
  \frac{ky-1}{x^2}       &  \frac{-k}{x} \end{array}   \right) = \frac{1}{x^3},$$
for $(x,y)\in \triangle_k.$

Next we need to find the inverses $t_k:\triangle_k \rightarrow \triangle $ of the triangle map, one for each non-negative integer $k$.  These are 
$$t_k(x,y) = \ \left( \frac{1}{1+kx+y},  \frac{x}{1+kx+y} \right)$$
which can be checked by  direct calculation,.

By definition, for any differentiable map $T$, the transfer operator is 
$$\mathcal{L}_T(F)(p) = \sum_{q:T(q)=p} \frac{1}{\mbox{Jac}(T(q))} f(q).$$ This leads to 
\begin{prop} The transfer operator for the triangle map is
$$\mathcal{L}(f)(x,y) = \sum_{n=0}^{\infty} \frac{1}{(1+nx+y)^3} f \left( \frac{1}{1+kx+y},  \frac{x}{1+kx+y} \right).$$
\end{prop}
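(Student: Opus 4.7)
The plan is to apply the general definition of the transfer operator directly, using the two ingredients already computed in the excerpt: the Jacobian of $T$ on each branch $\triangle_k$, and the explicit inverse branches $t_k$.

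First I would observe that, by the definition of the triangle map, $T:\triangle\to\triangle$ is a countable-to-one map whose restriction to each $\triangle_k$ is a bijection onto $\triangle$ with inverse $t_k$. Hence for any point $p=(x,y)\in\triangle$ the full preimage set is
\[
T^{-1}(p)=\{t_k(x,y):k=0,1,2,\ldots\}=\left\{\left(\tfrac{1}{1+kx+y},\tfrac{x}{1+kx+y}\right):k\geq 0\right\},
\]
and these preimages are pairwise distinct (since $t_k$ lands in $\triangle_k$ and the $\triangle_k$ are disjoint).

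Next I would evaluate the Jacobian factor at each preimage. From the computation above, the Jacobian of $T$ at a point $q=(u,v)\in\triangle_k$ is $J(q)=1/u^3$. Substituting $q=t_k(x,y)$, whose first coordinate is $u=1/(1+kx+y)$, gives $J(t_k(x,y))=(1+kx+y)^3$, and therefore
\[
\frac{1}{|J(t_k(x,y))|}=\frac{1}{(1+kx+y)^3}.
\]

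Finally I would assemble these pieces in the definition
\[
\mathcal{L}(f)(p)=\sum_{q:T(q)=p}\frac{1}{|J(q)|}\,f(q)
\]
to obtain the stated formula
\[
\mathcal{L}(f)(x,y)=\sum_{k=0}^{\infty}\frac{1}{(1+kx+y)^3}\,f\!\left(\frac{1}{1+kx+y},\frac{x}{1+kx+y}\right).
\]
There is no real obstacle in this proof; the only subtlety worth noting is that the summation must be carried out over all inverse branches $t_k$, $k\geq 0$, rather than over the branches of a single local inverse, and that one should (at least implicitly) justify convergence of the series for a sufficiently regular class of $f$, which will be taken up when the appropriate Banach space is chosen in later sections.
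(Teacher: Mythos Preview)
Your proposal is correct and follows exactly the paper's approach: the paper simply computes the Jacobian $1/x^3$ on each $\triangle_k$ and the inverse branches $t_k$, then states the proposition as an immediate consequence of the general definition of the transfer operator. Your write-up just makes explicit the substitution $u=1/(1+kx+y)$ in the Jacobian that the paper leaves to the reader.
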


This map is also be called  the Perron-Frobenius operator or the Ruelle-Perron-Frobenius operator with respect to the Lebesgue measure $\mathrm{d}x\mathrm{d}y,$ having the property that  for all $f\in L^1(\mathrm{d}x\mathrm{d}y)$, 
and all measurable subsets $A \subset \triangle$, 
$\int_A \mathcal{L}_T(f) \mathrm{d}x\mathrm{d}y = \int_{T^{-1}A} f  \mathrm{d}x\mathrm{d}y.$
For more on transfer maps in general, see Baladi  \cite{Baladi00} and for continued fractions, see Iosifescu and Kraaikamp \cite{Iosifescu-Kraaikamp1}.

By direct calculation,  for  the function 
$$f(x,y) = \frac{1}{x(1+y)}$$
we have
that
$$\mathcal{L}(f)(x,y) = f(x,y).$$
  What we now must do is find appropriate vector spaces of functions on which the operator $\mathcal{L}$ acts so that $f(x,y)$ is the leading  eigenfunction, meaning that we would know enough about the spectrum of $\mathcal{L}$ to be able to state that the largest eigenvalue is one, and that it has a one-dimensional eigenspace.

\section{Banach Space Approach}
We want to identify a Banach space of real-valued functions on $\triangle$ on which the transfer operator has leading eigenfunction $ \frac{1}{x(1+y)}$ with eigenvalue one.

Let $\mathcal{C}(\triangle)$ be the vector space of real-valued  continuous functions on the triangle $\triangle$.  Note that since $\triangle $ is open, the elements of $\mathcal{C}(\triangle)$ need not be bounded.

Set 
$$V=\{f \in \mathcal{C}(\triangle): \exists C\in \R \;\mbox{such that} \; |xf(x,y)| < C, \forall (x,y)\in \triangle \}.$$
This is a Banach space under the norm
$$||f||= \sup_{(x,y\in \triangle} |xf(x,y)| $$

\begin{theorem} The transfer operator is a continuous linear map from $V$ to itself.

\end{theorem}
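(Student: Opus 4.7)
The plan is to establish linearity (immediate from the formula) and then prove boundedness with respect to the norm $\|f\| = \sup_{(x,y) \in \triangle} |xf(x,y)|$, from which continuity follows. The target inequality is $\|\mathcal{L}(f)\| \leq K \|f\|$ for some absolute constant $K$ (I expect $K = 2$).

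Suppose $f \in V$ with $\|f\| = C$, so $|f(u,v)| \leq C/u$ whenever $(u,v) \in \triangle$. Substituting $u = 1/(1+nx+y)$ into the formula for $\mathcal{L}(f)$ gives
$$\bigl|x\,\mathcal{L}(f)(x,y)\bigr| \;\leq\; \sum_{n=0}^{\infty} \frac{x}{(1+nx+y)^3}\cdot C(1+nx+y) \;=\; C \sum_{n=0}^{\infty} \frac{x}{(1+nx+y)^2}.$$
So it suffices to bound the series $S(x,y) = \sum_{n=0}^{\infty} \frac{x}{(1+nx+y)^2}$ uniformly on $\triangle$. The natural approach is an integral comparison: since the summand is a decreasing function of $n$ on $[0,\infty)$, I bound the $n=0$ term crudely by $x < 1$ and then compare the tail to the integral
$$\int_0^\infty \frac{x}{(1+tx+y)^2}\,dt \;=\; \frac{1}{1+y} \;\leq\; 1.$$
This yields $S(x,y) \leq 1 + 1 = 2$ uniformly on $\triangle$, hence $\|\mathcal{L}(f)\| \leq 2\|f\|$.

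It remains to verify that $\mathcal{L}(f)$ actually lies in $\mathcal{C}(\triangle)$. Each term of the series defining $\mathcal{L}(f)$ is continuous on $\triangle$ because the inverse branch $t_n(x,y) = (1/(1+nx+y),\, x/(1+nx+y))$ maps $\triangle$ continuously into $\triangle$ and $f$ is continuous there. The same bound that gave $S(x,y) \leq 2$ shows that on any compact subset $K \subset \triangle$ (where $x$ is bounded below by some $\delta > 0$) the tail $\sum_{n \geq N}$ of the series in $\mathcal{L}(f)$ tends to zero uniformly, so $\mathcal{L}(f)$ is a uniform limit of continuous functions on each such $K$ and therefore continuous on $\triangle$.

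The main obstacle, and essentially the entire content of the theorem, is the uniform bound on the series $\sum_{n\geq 0} x/(1+nx+y)^2$ when $x$ is allowed to approach $0$. This is exactly the regime the author singles out as different from the Gauss-map case: a naive bound term-by-term would fail because the number of terms of comparable size grows like $1/x$. The integral comparison works precisely because the factor of $x$ in the numerator cancels the spacing of the lattice $\{1+nx+y\}_n$, which is what makes the choice of norm $\|f\| = \sup|xf|$ (rather than, say, $\sup|f|$) the right one.
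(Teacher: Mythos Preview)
Your proof is correct and follows essentially the same route as the paper: linearity is immediate, and boundedness comes from the same integral-comparison estimate on $\sum_{n\ge 0} x/(1+nx+y)^2$, yielding $\|\mathcal{L}f\|\le 2\|f\|$ (the paper records the slightly looser constant $3$). You additionally verify that $\mathcal{L}(f)\in\mathcal{C}(\triangle)$ via local uniform convergence, a point the paper leaves implicit.
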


\begin{proof}  Linearity is immediate.  We will show for all $f\in V$ that
$$|| \mathcal{L} f(x,y)| | \leq 3 ||f||.$$

For any  $f\in V,$  let $C = ||f|| $, which means that for all $(x,y) \in \triangle$ we have
$$|xf(x,y)| < C.$$
Then
\begin{eqnarray*}
|x   \mathcal{L} f(x,y)| &=& \left| \sum_{n=0}^{\infty}\left( \frac{x}{(1+nx+y)^{3}}  \right) f\left(  \frac{1}{1+nx+y}, \frac{x}{1+nx+y}  \right)  \right|  \\
&\leq&  \sum_{n=0}^{\infty}  \left|  \left( \frac{x}{(1+nx+y)^{3}}  \right) f\left(  \frac{1}{1+nx+y}, \frac{x}{1+nx+y}  \right) \right|  \\
&=& \sum_{n=0}^{\infty}  \left|  \left( \frac{x}{(1+nx+y)^{2}}  \right)\left( \frac{1}{1+nx+y}\right) f\left(  \frac{1}{1+nx+y}, \frac{x}{1+nx+y}  \right) \right|  \\
&\leq&  \sum_{n=0}^{\infty}\left( \frac{Cx}{(1+nx+y)^{2}}  \right) \\
&=& Cx  \sum_{n=0}^{\infty}\left( \frac{1}{(1+nx+y)^{2}}  \right) \\
&\leq& Cx \left(    \int_0^{\infty} \left( \frac{1}{((1+y)+xt)^{2}}  \right) \mathrm{d}t + \frac{1}{(1+y)^{2}}   \right) \\
&\leq& Cx \left(   \frac{1}{x(1+y)} + 1 \right) \\
&\leq& 3C,
\end{eqnarray*}
giving us that the transform map is indeed a continuous map of the Banach space $V$ to itself.

\end{proof}

Before proving that the largest eigenvalue of  $\mathcal{L}$ is one, with multiplicity one, we need a few lemmas.

We have
\begin{lemma} For two functions $f,g\in V,$ suppose 
 $f(x,y)\leq g(x,y).$ Then for 
 $$ \mathcal{L}(f)(x,y) \leq \mathcal{L}(g)(x,y)$$
 for all $(x,y)\in \triangle.$
\end{lemma}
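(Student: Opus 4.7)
The plan is to exploit positivity of the kernel directly. For every $(x,y)\in\triangle$ we have $x,y>0$, so each weight $\tfrac{1}{(1+nx+y)^{3}}$ appearing in the defining series of $\mathcal{L}$ is strictly positive. Moreover, for each $n\geq 0$ the evaluation point $\bigl(\tfrac{1}{1+nx+y},\tfrac{x}{1+nx+y}\bigr)$ lies in $\triangle$ (this is exactly the image of the inverse branch $t_n$, computed earlier in the excerpt), so the hypothesis $f(u,v)\leq g(u,v)$ for all $(u,v)\in\triangle$ can be applied term-by-term.

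First, I would set $h=g-f$, so that $h\in V$ (since $V$ is a vector space) and $h(x,y)\geq 0$ on $\triangle$. By linearity of $\mathcal{L}$, it suffices to show $\mathcal{L}(h)(x,y)\geq 0$. Writing out
\[
\mathcal{L}(h)(x,y)=\sum_{n=0}^{\infty}\frac{1}{(1+nx+y)^{3}}\,h\!\left(\frac{1}{1+nx+y},\,\frac{x}{1+nx+y}\right),
\]
every summand is the product of a positive weight and a nonnegative value of $h$, hence nonnegative. The series converges in $\R$ because the previous theorem already guarantees $\mathcal{L}(h)\in V$, so no issue of conditional convergence or reordering arises. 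Summing nonnegative terms then yields $\mathcal{L}(h)(x,y)\geq 0$, i.e.\ $\mathcal{L}(f)(x,y)\leq \mathcal{L}(g)(x,y)$.

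There is essentially no obstacle here; the only point to be a little careful about is that the inverse-branch images $t_n(x,y)$ indeed all lie inside $\triangle$ (so that the comparison $f\leq g$ can be invoked at those points) and that the infinite series converges, both of which were already established before the statement of the lemma. The proof is therefore a one-line consequence of the positivity of the kernel together with the well-definedness of $\mathcal{L}$ on $V$.
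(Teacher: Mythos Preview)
Your proof is correct and follows essentially the same approach as the paper: reduce to $h=g-f\geq 0$ by linearity and observe that $\mathcal{L}(h)$ is a sum of nonnegative terms. You supply a bit more detail (convergence, that the inverse-branch images lie in $\triangle$), but the argument is the same one-line positivity observation.
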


\begin{proof} This follows from looking at 
$$0 \leq g(x,y) - f(x,y)$$ and then showing that 
$$0 =  \mathcal{L}(0) \leq  \mathcal{L}(g - f))(x,y)  =  \mathcal{L}(g)(x,y) - \mathcal{L}(f)(x,y),$$
which is clear.

\end{proof}

%We will need
%\begin{lemma} We have  
%for all $n\geq 1$, $$\frac{1}{4x(1+y)} \leq \mathcal{L}^n(1)(x,y)  \leq \frac{2}{x(1+y)}$$where $1(x,y)= 1$ is the constant function $1$\end{lemma}

%\begin{proof}The second inequality comes from the fact that for all $(x,y)\in \triangle$ we have $x(1+y)\leq (1+y) \leq 2,$ in which case $$1\leq \frac{2}{x(1+y)}$$and the fact that $\frac{2}{x(1+y)}$ is an eigenfunction with eigenvalue one.The first inequality is mildly more difficult.  We need to use that $$ \mathcal{L}(1)(x,y)  = \sum_{n=0}^{\infty} \frac{1}{(1+nx+ y)^3} = \left(\frac{1}{x^3}\right) \sum_{n=0}^{\infty} \frac{1}{\left(n + \frac{1+y}{x}\right)^3}.$$

%We must find a constant 

%Via an integral argument similar to the above, the growth of this last term is of order$$\frac{1}{x(1+y)^2}.$$There will then be a positive $c$ so that $$\frac{c}{x(1+y)} \leq  \mathcal{L}(1)(x,y) ,$$giving us our desired inequality.

%\end{proof}
We will also need

\begin{lemma} For  any  $f\in V,$ there is a positive constant $B$ such that for all $(x,y)\in \triangle$,
we have 
$$    \frac{-B}{x(1+y)} \leq f(x,y) \leq  \frac{B}{x(1+y)}.$$

\end{lemma}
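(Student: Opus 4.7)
The statement is essentially a matter of comparing two weights on the triangle, and the proof should be short.

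My plan is to start directly from the norm on $V$. Given $f \in V$, set $C = \|f\| = \sup_{(x,y)\in\triangle}|xf(x,y)|$, so that the bound $|f(x,y)| \leq C/x$ holds pointwise on $\triangle$. Since we want to replace the denominator $x$ with $x(1+y)$, we need only control the factor $1/(1+y)$.

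The key observation is that on $\triangle = \{(x,y) : 1 > x > y > 0\}$ one has $0 < y < 1$, hence $1 < 1+y < 2$. Consequently $1 < 2/(1+y)$ on all of $\triangle$, and multiplying the bound $|f(x,y)| \leq C/x$ by this harmless factor gives
\[
|f(x,y)| \;\leq\; \frac{C}{x} \;=\; \frac{C(1+y)}{x(1+y)} \;<\; \frac{2C}{x(1+y)}.
\]
Setting $B = 2C$ yields both inequalities $-B/(x(1+y)) \leq f(x,y) \leq B/(x(1+y))$ simultaneously, since the bound is on $|f|$.

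There is no real obstacle here; the lemma is a direct comparison between the weight $1/x$ built into the definition of $V$ and the weight $1/(x(1+y))$ attached to the candidate leading eigenfunction $f_0(x,y) = 1/(x(1+y))$. The content of the lemma is simply the statement that these weights are comparable on $\triangle$, with comparison constant $2$, because $1+y$ is bounded above and below by positive constants on the domain. The only thing to be careful about is to note that the domain $\triangle$ is bounded in $y$; this is what forces the factor $1+y$ to stay in the interval $(1,2)$ and makes the weights equivalent.
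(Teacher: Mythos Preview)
Your proof is correct and is essentially the same as the paper's: both arguments start from $|xf(x,y)|\le C$ and use that $0<y<1$ on $\triangle$ forces $1+y<2$, so $\frac{1}{x}<\frac{2}{x(1+y)}$ and one may take $B=2C$. The only cosmetic difference is that the paper builds the factor $2$ into $B$ at the outset (writing $|xf|<B/2$) rather than at the end.
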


\begin{proof} Let $f \in V$ be any element in $V$.  Then there is a $B$ so that 
$$|xf| < \frac{B}{2}.$$
(The reason for the $1/2$ will be clear in a moment.)
Then we have 
$$|f| < \frac{B}{2x} < \frac{B}{x(1+y)},$$
as desired.

\end{proof}

\begin{theorem} The largest eigenvalue of $\mathcal{L}:V\rightarrow V$ is one, with multiplicity one.
\end{theorem}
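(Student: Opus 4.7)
My plan is to split the theorem into two claims: (i) every eigenvalue $\lambda$ of $\mathcal{L}:V\to V$ satisfies $|\lambda|\leq 1$, and (ii) the eigenspace for $\lambda=1$ is spanned by $f_0(x,y) = 1/(x(1+y))$. Since we already know $\mathcal{L}f_0=f_0$, these together give the theorem.

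For (i), suppose $\mathcal{L}g=\lambda g$ with $g\in V$ nonzero. Lemma 2 provides $B>0$ with $|g|\leq Bf_0$. Because the weights in the definition of $\mathcal{L}$ are nonnegative, $|\mathcal{L}h|\leq \mathcal{L}|h|$ for any $h\in V$; combining this with Lemma 1 and the identity $\mathcal{L}f_0=f_0$, induction yields
$$|\lambda|^n|g| \;=\; |\mathcal{L}^n g| \;\leq\; \mathcal{L}^n|g| \;\leq\; Bf_0$$
for every $n\geq 0$. If $|\lambda|>1$, letting $n\to\infty$ forces $g\equiv 0$, a contradiction; so $|\lambda|\leq 1$.

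For (ii), let $\mathcal{L}g=g$ and set $\phi(x,y)=x(1+y)g(x,y)$; by Lemma 2 this $\phi$ is continuous and bounded on $\triangle$. Substituting $g=\phi/(x(1+y))$ into $\mathcal{L}g=g$ and simplifying (using $g(t_n(x,y)) = \phi(t_n(x,y))\,(1+nx+y)^2/(1+(n+1)x+y)$) yields
$$\phi(x,y) \;=\; \sum_{n=0}^{\infty} w_n(x,y)\,\phi(t_n(x,y)), \qquad w_n(x,y) = \frac{x(1+y)}{(1+nx+y)(1+(n+1)x+y)}.$$
The partial fraction $w_n(x,y) = (1+y)\bigl((1+nx+y)^{-1}-(1+(n+1)x+y)^{-1}\bigr)$ telescopes to $\sum_n w_n \equiv 1$, so $\phi$ is a convex average of its own values at the inverse-branch images $t_n(x,y)$.

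The main obstacle is then to deduce from this mean-value identity that $\phi$ is constant, because a naive maximum-principle argument fails on the open domain $\triangle$ (where $\sup\phi$ need not be attained). My plan is to iterate the identity $N$ times to write
$$\phi(x,y) \;=\; \sum_{(n_1,\ldots,n_N)} W_{\vec n}(x,y)\,\phi\bigl(t_{n_1}\circ\cdots\circ t_{n_N}(x,y)\bigr),$$
with nonnegative weights $W_{\vec n}$ summing to $1$, and then to exploit the fact that the images $t_{n_1}\circ\cdots\circ t_{n_N}(\triangle)$ are precisely the level-$N$ cylinders $\triangle_{n_1,\ldots,n_N}$ of the triangle map, which form a neighborhood basis at every point as $N\to\infty$. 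Comparing the representation at two points $p,q\in\triangle$ and using the continuity of $\phi$, the difference $|\phi(p)-\phi(q)|$ is dominated by the oscillation of $\phi$ over shrinking cylinders and therefore vanishes. Thus $\phi\equiv c$ and $g=cf_0$, giving a one-dimensional $\lambda=1$ eigenspace and completing the proof.
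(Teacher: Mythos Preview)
Your argument for $|\lambda|\leq 1$ is essentially the paper's: sandwich the eigenfunction by $\pm B f_0$, iterate, and read off the bound. No issue there.

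The gap is in part (ii). Your convex-combination identity $\phi=\sum_n w_n\,\phi\circ t_n$ with $\sum_n w_n\equiv 1$ is correct and elegant, but the step from that to ``$\phi$ is constant'' is where the real content lies, and your sketch does not go through. You assert that the level-$N$ cylinders $t_{n_1}\circ\cdots\circ t_{n_N}(\triangle)$ form a neighborhood basis at every point. For the triangle map this is false: it is known (already from Garrity's original paper on the triangle sequence) that there exist infinite sequences $(n_1,n_2,\ldots)$ for which the nested cylinders converge to a line segment rather than a point, so $\sup_{\vec n}\mathrm{diam}\,\triangle_{\vec n}$ does \emph{not} tend to $0$ as $N\to\infty$. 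Your comparison of the two convex representations of $\phi(p)$ and $\phi(q)$ also silently needs uniform continuity of $\phi$ on the open triangle (which you do not have) and control of the difference $|W_{\vec n}(p)-W_{\vec n}(q)|$ of the weights, neither of which is addressed.

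More conceptually, a bounded continuous fixed point of the Markov operator $\phi\mapsto\sum_n w_n\,\phi\circ t_n$ being forced to be constant is exactly the statement that the underlying dynamical system is ergodic. So what you are attempting in part (ii) is a direct proof of ergodicity of the triangle map via a shrinking-cylinder argument, and the failure of cylinders to shrink uniformly is precisely why ergodicity for this map is nontrivial. The paper does not attempt this; it invokes the ergodicity theorem of Messaoudi, Nogueira and Schweiger and then appeals to the standard fact (Lasota--Mackey) that an ergodic nonsingular map has at most one stationary density. If you want a self-contained route, you would need to supply a genuine proof of ergodicity, not just the cylinder heuristic.
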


\begin{proof}   

(The overall structure of this proof is similar to the corresponding result for the Gauss map.  Again, what is new is finding the correct Banach space and using the fairly recent result of Messaoudi, Nogueira, and Schweiger \cite{Messaoudi-Nogueira-Schweiger09} that the triangle map is ergodic.)

Let $f\in V$ be an eigenfunction of $ \mathcal{L}$ with eigenvalue $\lambda.$

We know there is a constant $B$ such that   for all $(x,y) \in \triangle $ 
$$\frac{-B}{x(1+y)} \leq f(x,y) \leq \frac{B}{x(1+y)}.$$
Then for all positive $n$,
$$\frac{-B}{x(1+y)} \leq \mathcal{L}^{(n)}f(x,y) \leq \frac{B}{x(1+y)}$$
and hence
$$\frac{-B}{x(1+y)} \leq \lambda^nf(x,y) \leq \frac{B}{x(1+y)}.$$
Thus we need 
$$|\lambda | \leq 1.$$

We now have to show that this eigenvalue has multiplicity one.
To prove this from scratch would be hard, but using that the triangle map has been proven to be ergodic by Messaoudi, Nogueira, and Schweiger  \cite{Messaoudi-Nogueira-Schweiger09}, the result follows immediately from  Theorem 4.2.2 in Lasota and  Mackey's {\it Probabilistic Properties of Deterministic Systems} \cite{Lasota-Mackey85}  which states 
\begin{theorem*}Let $(X, \mathcal{A}, \mu)$ be a measure space, $S:X\rightarrow X$ a non-singular transformation, and $P$ the Frobenius-Perron operator associated to $S$.  If $S$ is ergodic, then there is at most  one stationary density $f_*$ of $P$.  Further, if there is a unique stationary density $f_*$ of $P$ and $f_*(x) >0$, a.e., then $S$ is ergodic.
\end{theorem*}

Here $P$ is our $\mathcal{L}$.  This gives us our result.

\end{proof} 

It would be interesting to give an argument for this along the lines of the first few chapters in \cite{krsanoselskiii64}.

\section{The Gauss-Kuzmin Distribution for Triangle Maps}
(This section is joint work with Ilya Amburg, and is a special case of chapter 16 in \cite{Amburg2014}).

We would like to know the statistics behind a number's triangle sequence.   Knowing now that the transfer operator has leading eigenvector $1/x(1+y)$, coupled with the work of Messoundi, Noguira and Schweiger \cite{Messaoudi-Nogueira-Schweiger09} showing that the triangle map is ergodic, allows us to apply standard theorems to explicitly determine these statistics.

Let $(x,y) \in \triangle $ have triangle sequence $(a_1,a_2, a_3, \ldots ).$
We define
$$P_{n,k} (x,y) = \frac{\#  \{a_i:a_i=k\; \mbox{and}\; 1\leq i\leq n\}  }{n} ,$$
or, in other words, the percentage of the $a_i$ that are equal to $k$ in the first $n$  terms of the triangle sequence.
Provided the limit exists, we set 
$$P_k(x,y) = \lim_{n\rightarrow \infty} p_{n,k} (x,y) .$$

We know that $\mathrm{d} \mu =\frac{12}{\pi^2x(1+y)}\mathrm{d}x \mathrm{d} y$ is an invariant measure with respect to the triangle map $T:\triangle \rightarrow \triangle$.  (The extra $12/\pi^2$ is just to make  the measure of the domain $\triangle$  be one.)  Then in direct analog to Theorem 9.14 in \cite{Karpenkov13}, we have

\begin{theorem} For almost all $(\alpha, \beta) \in \triangle$,
$$P_k(\alpha, \beta) = \int_{\triangle_k} \mathrm{d} \mu =  \int_{\triangle_k} \frac{12}{\pi^2x(1+y)}\mathrm{d}x \mathrm{d} y.$$
\end{theorem}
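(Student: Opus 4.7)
The plan is to recognize $P_{n,k}(x,y)$ as a Birkhoff ergodic average and invoke the Birkhoff ergodic theorem. Specifically, by the definition of the triangle sequence, the $i$-th digit $a_i$ equals $k$ exactly when $T^{i-1}(x,y) \in \triangle_k$, so
$$P_{n,k}(x,y) = \frac{1}{n}\sum_{i=0}^{n-1} \chi_{\triangle_k}\bigl(T^i(x,y)\bigr).$$
This is a Birkhoff average of the bounded measurable (indeed $L^1(\mu)$) function $\chi_{\triangle_k}$ under iteration of $T$.

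The next step is to check that the probability measure $\mathrm{d}\mu = \frac{12}{\pi^2 x(1+y)}\,\mathrm{d}x\,\mathrm{d}y$ is genuinely $T$-invariant. The identity $\mathcal{L}\bigl(\tfrac{1}{x(1+y)}\bigr) = \tfrac{1}{x(1+y)}$ proven in the previous section, together with the defining property $\int_A \mathcal{L}_T(f)\,\mathrm{d}x\,\mathrm{d}y = \int_{T^{-1}A} f\,\mathrm{d}x\,\mathrm{d}y$ of the Perron-Frobenius operator (noted earlier in the excerpt), yields for every measurable $A\subset\triangle$
$$\mu(A)=\int_A \tfrac{12}{\pi^2 x(1+y)}\,\mathrm{d}x\,\mathrm{d}y = \int_A \tfrac{12}{\pi^2}\mathcal{L}\bigl(\tfrac{1}{x(1+y)}\bigr)\,\mathrm{d}x\,\mathrm{d}y = \int_{T^{-1}A}\tfrac{12}{\pi^2 x(1+y)}\,\mathrm{d}x\,\mathrm{d}y = \mu(T^{-1}A),$$
so $T$ preserves $\mu$. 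The normalization constant $12/\pi^2$ is precisely what makes $\mu(\triangle)=1$.

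With invariance in hand and using the ergodicity of $T$ from Messaoudi--Nogueira--Schweiger, the Birkhoff ergodic theorem applies to $\chi_{\triangle_k}\in L^1(\mu)$ and gives, for $\mu$-almost every $(\alpha,\beta)\in\triangle$,
$$\lim_{n\to\infty} \frac{1}{n}\sum_{i=0}^{n-1} \chi_{\triangle_k}\bigl(T^i(\alpha,\beta)\bigr) = \int_\triangle \chi_{\triangle_k}\,\mathrm{d}\mu = \int_{\triangle_k} \frac{12}{\pi^2 x(1+y)}\,\mathrm{d}x\,\mathrm{d}y.$$

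The only subtlety, and the main point to address carefully, is the passage from ``$\mu$-almost every'' to ``almost every'' with respect to Lebesgue measure. Since the density $\frac{12}{\pi^2 x(1+y)}$ is strictly positive and finite throughout the open set $\triangle$, the measures $\mu$ and Lebesgue measure are mutually absolutely continuous on $\triangle$, so the two notions of ``almost every'' coincide. This justifies the statement of the theorem as formulated and completes the proof.
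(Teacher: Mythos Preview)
Your argument is correct and is precisely the standard Birkhoff ergodic theorem proof that the paper invokes by citing Theorem 9.14 of Karpenkov: express $P_{n,k}$ as an ergodic average of $\chi_{\triangle_k}$, use the eigenfunction identity $\mathcal{L}\bigl(\tfrac{1}{x(1+y)}\bigr)=\tfrac{1}{x(1+y)}$ to get $T$-invariance of $\mu$, apply Birkhoff together with the Messaoudi--Nogueira--Schweiger ergodicity result, and pass from $\mu$-a.e.\ to Lebesgue-a.e.\ via mutual absolute continuity. The paper supplies no details beyond the reference, so your write-up is in fact more complete than what appears there.
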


The proof is exactly analogous to that in \cite{Karpenkov13}. 

\begin{theorem} For the triangle map $T$, we have that 
$$P(0) =1 -  \frac{6\mathrm{Li}_2\left( \frac{1}{4} \right) + 12\log^2(2) }{\pi^2}$$
and, for $k>0$,
\begin{eqnarray*}P(k)&=& \frac{6}{\pi^2} \left[ \mathrm{Li}_2\left( \frac{1}{(k_1)^2} \right)\right.   - \mathrm{Li}_2\left( \frac{1}{(k+2)^2} \right) \\
&&  + 4\log^2(k+1)- 2\log^2\left( \frac{k+2}{k+1} \right) -2\log(k(k+2))\log(k+1) \bigg]  \end{eqnarray*}
\end{theorem}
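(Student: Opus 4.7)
The plan is to reduce the theorem to an explicit integration. By the preceding theorem,
\[
P(k)=\frac{12}{\pi^2}\int_{\triangle_k}\frac{dx\,dy}{x(1+y)},
\]
so the task becomes to evaluate these integrals in closed form and match them to the stated expressions.

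First I would describe $\triangle_k$ explicitly. Solving the defining inequalities $1-x-ky\geq 0>1-x-(k+1)y$ together with $1>x>y>0$ gives $\triangle_0=\{(x,y):1/2<x<1,\ 1-x<y<x\}$. For $k\geq 1$ the upper bound on $y$ switches from $y=x$ to $y=(1-x)/k$ at $x=1/(k+1)$, so $\triangle_k$ is the disjoint union of
\[
A_k=\left\{\frac{1}{k+2}<x<\frac{1}{k+1},\ \frac{1-x}{k+1}<y<x\right\}
\]
and
\[
B_k=\left\{\frac{1}{k+1}<x<1,\ \frac{1-x}{k+1}<y<\frac{1-x}{k}\right\}.
\]
With these in hand, I would integrate in $y$ first, which turns $\int dy/(1+y)$ into differences of logarithms of linear functions of $x$, and then integrate in $x$ using
\[
\int\frac{\log(1-u)}{u}\,du=-\mathrm{Li}_2(u)+C,
\]
together with the decomposition $\log(\alpha+\beta x)=\log|\alpha|+\log|1+\beta x/\alpha|$. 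This produces each $P(k)$ as a linear combination of dilogarithms evaluated at $1/(k+1)^2$, $1/(k+2)^2$, $\pm 1/(k+1)$, and $\pm 1/(k+2)$, together with products of logarithms of $k$, $k+1$, and $k+2$.

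The main obstacle is the final simplification: collapsing the half-dozen dilogarithm terms and several log-product terms produced above into the compact expression stated. The identities I would reach for are Landen's duplication $\mathrm{Li}_2(z)+\mathrm{Li}_2(-z)=\frac{1}{2}\mathrm{Li}_2(z^2)$ (which eliminates the dilogs at the negative rational arguments and collapses the $\pm 1/(k+1)$ and $\pm 1/(k+2)$ pairs into the $\mathrm{Li}_2$ at $1/(k+1)^2$ and $1/(k+2)^2$ that appear in the final formula), Euler's reflection $\mathrm{Li}_2(z)+\mathrm{Li}_2(1-z)=\pi^2/6-\log(z)\log(1-z)$, and the special value $\mathrm{Li}_2(1/2)=\pi^2/12-\frac{1}{2}\log^2 2$, which is what produces the $\mathrm{Li}_2(1/4)$ and $\log^2 2$ terms in the $P(0)$ formula. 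There are no analytic difficulties beyond careful bookkeeping, and a useful sanity check along the way is to verify numerically that $P(0)+\sum_{k\geq 1}P(k)=1$.
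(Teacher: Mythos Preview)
Your proposal is correct and follows essentially the same route as the paper: the same decomposition of $\triangle_0$ and, for $k\geq 1$, of $\triangle_k$ into the two pieces $A_k$ and $B_k$, followed by integrating first in $y$ and then in $x$. The paper simply delegates the resulting one-variable integrals and the final simplification to Mathematica, whereas you sketch how to carry them out by hand via Landen's duplication and related dilogarithm identities; so your outline is in fact more detailed than the paper's own proof.
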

Here $ \mathrm{Li}_2$ is the dilogarithm function.

\begin{proof}
These are calculations, for which we used Mathematica.
We have from the previous theorem  that $P_k(\alpha, \beta)$ is independent  of $(\alpha, \beta) \in \triangle$ for almost all elements in the domain. Then
we have 
\begin{eqnarray*}
P(0) &=& \int_{\triangle_k} \frac{12}{\pi^2x(1+y)}\mathrm{d}x \mathrm{d} y  \\
&=& \int_{\frac{1}{2}}^1\left( \int_{1-x}^x   \frac{12}{\pi^2x(1+y)} \mathrm{d}y\right)\mathrm{d}x \\
&=& 1 -  \frac{6\mathrm{Li}_2\left( \frac{1}{4} \right) +12\log^2(2) }{\pi^2}
\end{eqnarray*}
and, for $k>0$,
\begin{eqnarray*}
P(k) &=& \int_{\triangle_k} \frac{12}{\pi^2x(1+y)}\mathrm{d}x \mathrm{d} y  \\
&=& \int_{\frac{1}{k+1}}^1\left( \int_{\frac{1-x}{k+1}}^{\frac{1-x}{k}}   \frac{12}{\pi^2x(1+y)} \mathrm{d}y\right)\mathrm{d}x +  \int_{\frac{1}{k+2}}^{\frac{1}{k+1}}\left( \int_{\frac{1-x}{k+1}}^{x}   \frac{12}{\pi^2x(1+y)} \mathrm{d}y\right)\mathrm{d}x   \\
&=&\frac{6}{\pi^2} \left[ \mathrm{Li}_2\left( \frac{1}{(k+1)^2} \right)\right.   - \mathrm{Li}_2\left( \frac{1}{(k+2)^2} \right) \\
&&  + 4\log^2(k+1)- 2\log^2\left( \frac{k+2}{k+1} \right) -2\log(k(k+2))\log(k+1) \bigg]  \end{eqnarray*}

\end{proof}

\section{Attempts at nuclearity/ On various Hilbert spaces of functions}

For the Gauss map, Mayer and Roepstroff \cite{Mayer-Roepstorff1987} showed that the transfer operator is nuclear of trace class zero.  This section shows that nontrivial analogs hold for the triangle map.  The difficulty in part stems from that the function $1/x(1+y)$ is not in the Hilbert space $L^2(\mathrm{d}x\mathrm{d} y)$, in contrast to the leading eigenfunction $1/(1+x)$ of the Gauss map's transfer operator  being in $L^2(\mathrm{d} x)$.  Instead, we will think of $1/x(1+y)$ as in $L^2(\mathrm{d} y)$, treating the variable $x$ as a parameter.

On the positive reals, set
$$\mathrm{d}m(t) = \frac{t}{e^t-1} \mathrm{d}t.$$
Then set
\begin{eqnarray*}
\eta_k(s) &=& \frac{s^ke^{-s}}{(k+1)!} \\
e_k(t) &=&L_k^1(t) \\
E_k(x,y) &=&   \int_0^{\infty}   \left( \frac{1}{x^2}\right)    e^{-t\left(\frac{1-x+y}{x}\right)}  e_k(t) \mathrm{d}m( t )\\
\mathcal{K}_{T}(\phi(x,t)) &=& \int_0^{\infty} \frac{J_1(2\sqrt{st})}{\sqrt{st}} \frac{t}{e^t-1} \phi(x,s) \mathrm{d}m(s) \\
\widehat{\phi} \left(xy\right) &=&  \frac{1}{x} \int_{s=0}^{\infty} e^{ -sy}\phi   (x,s) \mathrm{d} m(s) \\
\langle \alpha(s), \beta(s) \rangle &=& \int_0^{\infty} \alpha(s)\beta(s) \mathrm{d} m(s).
\end{eqnarray*}
(Here $L_k^1(t) $ denotes the first Laguerre polynomial and  $J_1$ denotes the Bessel function of order one.)

Our goal is to show that 
\begin{eqnarray*}
\mathcal{L}_{T}f(x,y) &=& \mathcal{L}_{T}\widehat{\phi}(x,y))  \\
&= & \frac{1}{x^2} \int_0^{\infty}  e^{-t\left(\frac{1-x+y}{x}\right)}     \mathcal{K}_{T}(\phi(x,t)) \mathrm{d}m(t)\\
&=& \sum_{k=0}^{\infty}\langle \phi(x,s), \eta_k(s) \rangle E_k(x,y)
\end{eqnarray*}

After making a significant change of variables, we will see that our argument mirrors that given in \cite{Mayer-Roepstorff1987}.

 We start with functions $f(x,y)$ with domain $\triangle$ for which there is a $\phi(x,s)$  that is in  $L^2(\mathrm{d m}(s))$ such that  $f= \widehat{\phi}$ and such that $\mathcal{L}_{T} (f)$ exists.   Note that  for $\phi(x,y) = \frac{1-e^{-y}}{y}$
 $$\frac{1}{x(1+y)} = \widehat{\phi}(x,y)) ;$$
 thus such functions exist and include the function $1/x(1+y).$

We have
\begin{eqnarray*}
\mathcal{L}_{T}(f)(x,y) &=& \sum_{n=0}^{\infty} \frac{1}{(1+nx+y)^3} f\left( \frac{1}{1+nx+y}, \frac{x}{1+nx + y}  \right)  \\
&=&\sum_{n=0}^{\infty} \frac{1}{(1+nx+y)^3} \widehat{\phi }  \left( \frac{1}{1+nx+y}, \frac{x}{1+nx + y} \right) .
\end{eqnarray*}
Set
$$w=\frac{1+y}{x}.$$
It is this change of variables that will allow us to use \cite{Mayer-Roepstorff1987}.

Then we have 
\begin{eqnarray*}
 \frac{1}{(1+nx+y)^3}  &=&  \frac{1}{x^3(n+w)^3} \\
   \phi \left( \frac{1}{1+nx+y}, \frac{x}{1+nx + y}  \right) &=& \phi \left( \frac{1}{x(n+w)}, \frac{1}{n+w}  \right)
\end{eqnarray*}

Then $\mathcal{L}_{T,\mu}(f)(x,y) $ is
$$ \sum_{n=0}^{\infty}  \frac{1}{x^3(n+w)^3}  \widehat{ \phi }\left( \frac{1}{x(n+w)}, \frac{1}{n+w}  \right)$$
which is 
$$ \sum_{n=0}^{\infty}  \frac{1}{x^3(n+w)^3}  (x(n+w)) \int_0^{\infty} e^{ \frac{-s}{n+w} }\phi(x,s) \mathrm{d}m(s)$$
%which is
%$$\sum_{n=0}^{\infty}  \frac{1}{x^2(n+w)^2}   \int_0^{\infty} e^{ \frac{-s}{n+w} }\phi(x,s) \mathrm{d}m(s),$$
which is 

$$\frac{1}{x^2}  \int_0^{\infty} \sum_{n=0}^{\infty}  \frac{1}{(n+w)^2} e^{ \frac{-s}{n+w} }\phi(x,s) \mathrm{d}m(s).$$

We want to rewrite 
$$ \sum_{n=0}^{\infty}  \frac{1}{(n+w)^2} e^{ \frac{-s}{n+w} }$$
to eliminate the dependence on $n$.

We have 
\begin{eqnarray*}
 \sum_{n=0}^{\infty}  \frac{1}{(n+w)^2} e^{ \frac{-s}{n+w} }  &=&   \sum_{n=0}^{\infty}  \frac{1}{(n+w)^2} \sum_{k=0}^{\infty} \left(\frac{(-s)^k}{k!(n+w)^k}  \right)  \\
 &=&  \sum_{k=0}^{\infty} \left(\frac{(-s)^k}{k!}  \right)  \sum_{n=0}^{\infty} \frac{1}{(n+w)^{k+2}}
 \end{eqnarray*}

From standard arguments on  the Lerch Zeta function, we know that 
\begin{eqnarray*} \sum_{n=0}^{\infty} \frac{1}{(n+w)^{k+2}}   &=& \frac{1}{(k+1)!} \int_{0}^{\infty} \frac{t^{k+1}}{1-e^{-t}} e^{-wt}\mathrm{d}t  \\
%&=& \frac{1}{(k+1)!} \int_{0}^{\infty}t^k e^{t} \frac{t}{e^t-1} e^{-wt}\mathrm{d}t \\
%&=&\frac{1}{(k+1)!} \int_{0}^{\infty}t^k \frac{t}{e^t-1} e^{-(w-1)t}\mathrm{d}t \\
&=&\frac{1}{(k+1)!} \int_{0}^{\infty}t^k e^{-(w-1)t}\mathrm{d}m(t) .
\end{eqnarray*}

Thus 
\begin{eqnarray*}
 \sum_{n=0}^{\infty}  \frac{1}{(n+w)^2} e^{ \frac{-s}{n+w} }  &=& \sum_{k=0}^{\infty} \left(\frac{(-s)^k}{k!}  \right) \frac{1}{(k+1)!} \int_{0}^{\infty}t^k e^{-(w-1)t}\mathrm{d}m(t) \\
 &=&\int_{0}^{\infty}   \sum_{k=0}^{\infty} \left(\frac{(-st)^k}{k! (k+1)!}  \right)  e^{-(w-1)t}\mathrm{d}m(t) \\
 &=& \int_{0}^{\infty} \frac{J_1(2\sqrt{st})}{\sqrt{st}}e^{-(w-1)t}    \mathrm{d}m(t),
\end{eqnarray*}
where $J_1$ is the Bessel function of order one.

Then
\begin{eqnarray*}\mathcal{L}_{T,\mu}f(x,y) &=& \frac{1}{x^2}  \int_0^{\infty}       \int_{0}^{\infty} \frac{J_1(2\sqrt{st})}{\sqrt{st}}e^{-(w-1)t}          \phi(x,s)  \mathrm{d}m(t)  \mathrm{d}m(s)  \\
&=& \frac{1}{x^2} \int_0^{\infty}       \int_{0}^{\infty}  \frac{J_1(2\sqrt{st})}{\sqrt{st}}e^{-t\left(\frac{1-x+y}{x}\right)}         \phi(x,s)  \mathrm{d}m(t)  \mathrm{d}m(s)
\end{eqnarray*}

This is why we
set 
$$\mathcal{K}_{T,\mu}(\phi(x,t)) = \int_0^{\infty} \frac{J_1(2\sqrt{st})}{\sqrt{st}}   \phi(x,s) \mathrm{d}m(s) . $$
We have 
\begin{eqnarray*}   \mathcal{L}_{T,\mu}f(x,y)
&=& \frac{1}{x^2}  \int_0^{\infty}       \int_{0}^{\infty}    \frac{J_1(2\sqrt{st})}{\sqrt{st}}   e^{-t\left(\frac{1-x+y}{x}\right)}         \phi(x,s)  \mathrm{d}m(s)  \mathrm{d}m(t)  \\
&=&  \frac{1}{x^2} \int_0^{\infty}    e^{-t\left(\frac{1-x+y}{x}\right)}   \mathcal{K}_{T,\mu}(\phi(x,t))  \mathrm{d}m(t).
\end{eqnarray*}

We want an understanding of the operator $\mathcal{K}_{T,\mu}$.

From \cite{Mayer-Roepstorff1987},  we know that 

$$ \frac{J_1(2\sqrt{st})}{\sqrt{st}}     = \sum_{k=0}^{\infty} L_k^1(t) \frac{s^ke^{-s}}{(n+1)!}$$
where 
$L_k^1(s)$ is a first Laguerre polynomial.

We set 
$$e_k(s) = L_k^1(s), \eta_k(s) = \frac{s^ke^{-s}}{(k+1)!}$$
Both of these are actually functions of only one variable.

Then, also from  \cite{Mayer-Roepstorff1987}, we have

$$\mathcal{K}_{T,\mu}(\phi(x,t)) =   \sum_{k=0}^{\infty}\langle \phi(x,s), \eta_k(s) \rangle e_k(t)  $$
as follows from 
\begin{eqnarray*}
\mathcal{K}_{T,\mu}(\phi(x,t)) &=& \int_0^{\infty} \frac{J_1(2\sqrt{st})}{\sqrt{st}}   \phi(x,s) \mathrm{d}m(s) \\
&=&  \int_0^{\infty}		\sum_{k=0}^{\infty} L_1^k(t) \frac{s^ke^{-s}}{(n+1)!}		\phi(x,s) \mathrm{d}m(s) \\
&=&   \int_0^{\infty}		\sum_{k=0}^{\infty} e_k(t) \eta_k(s) 		\phi(x,s) \mathrm{d}m(s)  \\
&=& \sum_{k=0}^{\infty} e_k(t)    \int_0^{\infty}\eta_k(s) 		\phi(x,s) \mathrm{d}m(s) \\
&=&  \sum_{k=0}^{\infty}\langle \phi(x,s), \eta_k(s) \rangle e_k(t).
\end{eqnarray*}

The functions $\eta_k(s)$ and $e_k(s)$  have already been studied in \cite{Mayer-Roepstorff1987}, where it was shown that 
$$\eta_k(s), e_k(s) \in L^2(\mathrm{d}m(s))$$
and, with respect to this inner product, 
$$\sum_{k=0}^{\infty} |e_k|\cdot |\eta_k| < \infty.$$

Then we have 
\begin{eqnarray*}\mathcal{L}_{T,\mu}f(x,y) &=&  \int_0^{\infty}   \left( \frac{1}{x^2}\right)    e^{-t\left(\frac{1-x+y}{x}\right)}  \left(  \sum_{k=0}^{\infty}\langle \phi(x,s), \eta_k(s) \rangle e_k(t) \right) \mathrm{d}m(t) \\
&=& \sum_{k=0}^{\infty}\langle \phi(x,s), \eta_k(s) \rangle \int_0^{\infty}   \left( \frac{1}{x^2}\right)    e^{-t\left(\frac{1-x+y}{x}\right)}  e_k(t) \mathrm{d}m(t) \\
&=&  \sum_{k=0}^{\infty}\langle \phi(x,s), \eta_k(s) \rangle E_k(x,y),
\end{eqnarray*}
where we set 
$$ E_k(x,y) =  \int_0^{\infty}   \left( \frac{1}{x^2}\right)    e^{-t\left(\frac{1-x+y}{x}\right)}  e_k(t) \mathrm{d}m(t) . $$

We want to show that   $E_k(x,y)\in L^2(\mathrm{d}y)$, again treating the variable $x$ as a parameter.  (The calculation below will also yield that $E_k(x,y)\not\in L^2(  \mathrm{d} x    \mathrm{d}y)$.)

We have 
\begin{eqnarray*}
E_k(x,y)  &=&   \int_0^{\infty} \left( \frac{1}{x^2}\right)  e^{-t\left(\frac{1-x+y}{x}\right)}  L_n^1(t)  \mathrm{d} m_2(t)\\
&=&   \int_0^{\infty} \left( \frac{1}{x^2}\right)  e^{-t\left(\frac{1-x+y}{x}\right)}  L_n^1(t)  \frac{t}{e^t-1} \mathrm{d}t  \\
&=&  \left( \frac{1}{x^2}\right)  \int_0^{\infty} e^{-t\left(\frac{1-x+y}{x}\right)}  L_n^1(t)  \frac{te^{-t}}{1-e^{-t}} \mathrm{d}t  \\
&=&   \left( \frac{1}{x^2}\right)  \int_0^{\infty}  e^{-t\left(\frac{1-x+y}{x}\right)}  L_n^1(t)  te^{-t} \sum_{m=0}^{\infty} e^{-mt}\\
&=&  \left( \frac{1}{x^2}\right)  \int_0^{\infty}     \sum_{m=0}^{\infty}    t L_n^1(t) e^{-s(m)t} \\
&& \mbox{ where $s(m) = \frac{1+y+mx}{x}$}\\
&=&   \left( \frac{1}{x^2}\right)  \sum_{m=0}^{\infty}   \int_0^{\infty}  t L_n^1(t) e^{-s(m)t}  \\
&=&  \left( \frac{1}{x^2}\right)  \sum_{m=0}^{\infty}  \frac{(n+1)(s(m) - 1)^n}{s(m)^{n+2}}   \\
&&\mbox{ by 7.414.8 in \cite{Gradshetyn-Ryzhik80}} \\
&=& (n+1)  \left( \frac{1}{x^2}\right)  \sum_{m=0}^{\infty}   \frac{(      \left( \frac{1+y+mx}{x} \right) - 1)^n}{  \left( \frac{1+y+mx}{x} \right)^{n+2}} \\
&=&  (n+1) \sum_{m=0}^{\infty}  \frac{(1+y+(m-1)x)^n}{(1+y + mx)^{n+2}}
\end{eqnarray*}

As the above series grows like $\sum \frac{1}{m^2x^2}$,  there is a constant $C$ independent of $n$ such that 
$$ (n+1)  \sum_{m=0}^{\infty}  \frac{(1+y+(m-1)x)^n}{(1+y + mx)^{n+2}} < (n+1) C  \left( \frac{1+y}{x^2}\right). $$

\section{Conclusion}
Of course traditional continued fractions have many rich properties.  It is natural to ask which of these properties have analogs for a given multidimensional continued fraction algorithm.
This paper is the start of finding the transfer operator analogs for the triangle map.  In \cite{Amburg-Garrity15},  similar analogs will be developed for triangle partition maps, which as mentioned, while being built in a natural way out of the triangle map, include many if not most multidimensional continued fraction algorithms.  

As mentioned in the introduction, transfer operator have been used for decades in the study of continued fractions, though people did not initially use the framework or rhetoric of functional analysis.  The framework was made explicit in the pioneering work   of Mayer and Roepstroff  \cite{Mayer-Roepstorff1987, Mayer-Roepstorff1988} and of Mayer \cite{Mayer90, Mayer91}, work that continues  even today \cite{Vallee97, Vallee98,Isola02, Antoniou-Shkarin03, Jenkinson-Gonzalez-Urbanksi03, DegliEspost-Isola-Knauf07, Hilgert08, Bonanno-Graffi-Isola08, Alkauskas12, Iosifescu14, Bonanno-Isola14, Ben Ammou-Bonanno-Chouari-Isola15}.   For each of these papers, there are corresponding natural questions for multidimensional continued fractions. The corresponding questions should be non-trivial but interesting.

\end{document}